\newtheorem{lemma}{Lemma}
\newtheorem{theorem}{Theorem}
\newtheorem{definition}{Definition}
\newtheorem{remark}{Remark}
\def\BState{\State\hskip-\ALG@thistlm}
\pgfplotsset{compat=newest} 
\pgfplotsset{plot coordinates/math parser=false} 
\newlength\figureheight 
\newlength\figurewidth 
\title{\LARGE \bf
Controllability and Data-Driven Identification of \\Bipartite Consensus on Nonlinear Signed Networks
}
\author{Mathias Hudoba de Badyn, Siavash Alemzadeh, and Mehran Mesbahi
\thanks{This research was supported by the U.S. Army Research Laboratory
and the U.S. Army Research Office under contract number
W911NF-13-1-0340, NSF grant SES-1541025 and AFOSR grant FA9550-16-1-0022.}
\thanks{The first and last authors are with the William E. Boeing Department of Aeronautics and Astronautics, and the second author is with the Department of Mechanical Engineering, all at the
        University of Washington, Seattle WA, USA
        {\tt\small \{hudomath,alems,mesbahi\}@uw.edu.}~\textcopyright~2017 IEEE}%
}
\begin{document}

\maketitle
\thispagestyle{empty}
\pagestyle{empty}

\begin{abstract}
Nonlinear networked systems are of interest in several areas of research, such as multi-agent systems and social networks.
In this paper, we examine the controllability of several classes of nonlinear networked dynamics on which the underlying graph admits negative weights.
Such signed networks exhibit bipartite clustering when the underlying graph is structurally balanced.
We show that structural balance is the key ingredient inducing uncontrollability when combined with a leader-node symmetry and a certain type of dynamical symmetry.
We then examine the problem of extracting the bipartite structure of such graphs from data using Extended Dynamic Mode Decomposition to approximate the corresponding Koopman operator.


\textit{Index Terms}$-$Nonlinear networks, signed networks, controllability, structural balance, graph symmetry, Koopman operator, extended dynamic mode decomposition

\end{abstract}

\section{INTRODUCTION}


Networked dynamical systems are the cornerstone of many modern technologies, as well as the focus of scientific research in many disciplines.
Some pertinent examples of networked systems are opinion dynamics~\cite{Hegselmann2002}, gene networks~\cite{Yeung2002}, flocking dynamics~\cite{vicsek1995} and autonomous coordinated flight~\cite{Model2000}.
One of the most well-studied networked dynamical systems is the consensus protocol, which is used in many applications including Kalman filtering~\cite{Olfati-Saber2005, HudobadebadynFillt2017}, multi-agent systems~\cite{Chen2013a,Saber2003,hudoba2017control} and robotics~\cite{Joordens2009}.
More recently, the controls community has begun examining consensus on networks admitting both cooperative and antagonistic interactions~\cite{Altafini2013,Altafini2012,Srivastava2011,Alemzadeh2017}.

Previous research in this direction has examined the controllability of consensus networks in both the linear case and non-linear generalizations of consensus~\cite{Mesbahi2010,Aguilar2014,Roy2016}.
The work by Rahmani~\emph{et al.} has shown that symmetries of networks characterized by graph automorphisms about leaders in the network cause uncontrollability~\cite{Rahmani2009a}.
This symmetry technique was generalized by Chapman and Mesbahi, who showed that signed fractional automorphisms provide the necessary and sufficient conditions for characterizing controllability of consensus networks~\cite{Chapman2015a}.
Further work examined methods of generating network topologies, for either performance improvements, as in \cite{Siami2016} and \cite{Zelazo2013}, or those that are controllable for consensus, such as in~\cite{Chapman2014a} and~\cite{Hudobadebadyn2016}.

Consensus algorithms on networks admitting antagonistic interactions were considered by~Altafini~\cite{Altafini2013} and Pan \emph{et al.} \cite{Pan2016a}.
The graph-theoretic property of \emph{structural balance}, used in the study of social networks, was identified as the property inducing bipartite consensus \cite{cartwright1956structural,Akiyama1981,Kaszkurewicz2012}.
Moreover, Clark \emph{et al.} worked on the leader selection problem on signed networks~\cite{Clark2017}.
Roy and Abaid discussed using antagonistic interactions to improve existing consensus-like algorithms~\cite{Roy2016}.

The generalization to nonlinear consensus algorithms has been studied in numerous settings.
Behaviour of nonlinear consensus protocols was considered by Srivastava \emph{et al.}~\cite{Srivastava2011}.
The extension of these consensus protocols to signed networks was studied by Altafini~\cite{Altafini2013}.
Moreover, the generalization of symmetry arguments for controllability was examined by by Aguilar and Gharesifard \cite{Aguilar2014}.

There has been a recent interest in applying data-driven methods to control of networks; one such approach has utilized the Koopman operator.
The Koopman operator provides a dynamical framework in which one considers the propagation of observables of the state, rather than the state itself.
The Koopman operator is linear, even for a non-linear dynamical system, but the trade-off is that the vector space of observables is generally infinite-dimensional~\cite{Budisic2012}.
This formalism lends itself well to a data-driven approach, allowing one to approximate the Koopman operator by collecting data~\cite{Williams2015}. 
Research by Pan \emph{et al.} has looked at identifying the bipartite structure of signed networks using data-driven methods~\cite{Pan2016}, furthering work done by Facchetti \emph{et al.}~\cite{Facchetti2011}, and Harary and Kabell~\cite{Harary1980}.

The contributions of this paper are twofold.
First, we show that the property of structural balance, when combined with symmetries in the underlying graph, as well as certain symmetries of the nonlinear dynamics, causes uncontrollability in the context of the accessibility problem.
In particular, we consider the same network flows studied in~\cite{Srivastava2011,Aguilar2014,Altafini2013}; however we extend the controllability analysis to signed dynamics.
Secondly, we extend the bipartite identification problem considered by Pan \emph{et al.} in \cite{Pan2016} to the case of signed nonlinear consensus networks.
In particular, we use a Koopman operator-theoretic approach alongside Extended Dynamic Mode Decomposition (EDMD) to extract a `Koopman mode' whose sign structure reveals the bipartite structure.

The paper is organized as follows.
In \S\ref{sec:math-prel}, we provide the relevant background on graph theory, nonlinear control and Koopman operator theory required for the discussion in this paper.
Then, the problems considered are outlined in \S\ref{sec:problem-statement}.
In \S\ref{sec:nonl-contr-sign}, we examine the controllability problem and in \S\ref{sec:bipart-ident-with} we consider the bipartite identification.
Relevant examples are shown in \S\ref{sec:examples}, and the paper is concluded in \S\ref{sec:concl-future-works}.



\section{MATHEMATICAL PRELIMINARIES}

\label{sec:math-prel}

A column vector with $n$ elements is referred to as $v\in\mathbb{R}^n$ where $v_i$ or $[v]_i$ both represent the $i$th element in $v$.
The square matrix $N\in\mathbb{R}^{n\times n}$ is \emph{symmetric} if $N^T=N$.
The identity matrix is denoted $I_n$.
For $w\in\mathbb{R}^n$ the matrix $\mathrm{diag}(w)$ is an $n\times n$ matrix with $w$ on its diagonal and zero elsewhere.
We say that $A$ is \emph{similar} to $B$ if there is an invertible matrix $R$ such that $R^{-1}AR=B$.
The unit vector $e_i$ is the column vector with all zero entries except $[e_i]_i=1$.
The column vector of all ones is denoted as $\textbf{1}$.
The \emph{cardinality} of a set $S$ is denoted as $|S|$.
The column space of a matrix $M$ is denoted by $\mathcal{R}(M)$.
We denote the Moore-Penrose pseudoinverse of a matrix $A$ as $A^\dag$.
The function $h$ is \emph{even} if $h(-x)=h(x)$ and is \emph{odd} if $h(-x)=-h(x)$.
The function $f$ is of class $C^r$ if the derivatives $f,f',\dots,f^{(r)}$ exist and are continuous.
The function $g\in C^\infty$, otherwise called \emph{smooth}, has derivatives of all orders. 
Let $F:\mathbb{R}^n\rightarrow\mathbb{R}^n$ be a vector field and let $\varphi:\mathbb{R}^n\rightarrow\mathbb{R}^n$ be a smooth mapping.
Then, $F$ is $\varphi$-\emph{invariant} if $(D\varphi(x))F(x)=F(\varphi(x))$ for all $x\in\mathbb{R}^n$ with $D\varphi(x)$ the Jacobian matrix of $\varphi$ at $x$.
Given a mapping $\gamma:\mathcal{M}\rightarrow\mathcal{M}$, the fixed point set of $\gamma$ is denoted by $\text{Fix}(\gamma)=\{x\in\mathcal{M}|\gamma(x)=x \}$.

\subsection{Consensus Dynamics on Signed Networks}\label{sec:graphss-graphsl}

We follow the standard notation and conventions for graph theory applied to multi-agent systems and consensus, as in \cite{Mesbahi2010}.
Below, we introduce some mathematics relating the ideas of symmetry and signed graphs that we will utilize in this paper.

An \emph{automorphism} of the graph $\mathcal{G}$ is a permutation $\phi:\mathcal{V}\to\mathcal{V}$ of its nodes such that $\phi(i)\phi(j)\in\mathcal{E}$ if and only if $ij\in \mathcal{E}$.
The permutation $\phi$ \emph{induces} a mapping $\varphi:\mathbb{R}^n\to\mathbb{R}^n$ such that $[\varphi(x)]_i = x_{\phi(i)}$.
Let the permutation matrix $ J $ be such that $[ J ]_{ij}=1$ if $\phi(i)=j$ and zero otherwise.
Therefore, the permutation matrix $ J $ is simply the Jacobian matrix of $\varphi$, in that $ J  = D \varphi$.
Thus, $\phi$ \emph{represents the automorphism} of $\mathcal{G}$ if and only if $ J  A(\mathcal{G})=A(\mathcal{G})  J $ (see \cite{Mesbahi2010}).

A \emph{signed graph} $\mathcal{G}_s$ is a graph with both positive and negative weights.
We define the \emph{signed graph Laplacian} as  ${L}_s={D}_s-{A}_s$, where $[{A}_s]_{ij}=\pm{A}_{ij}$, with the degree matrix given by 
${D}_{ii}=\sum_{j\in\mathcal{N}(i)} \left|{A}_{ij}\right|=\sum_{j\in\mathcal{N}(i)}\left|{W}_{ij}\right|$.
Following the generalization of consensus to signed graphs in~\cite{Altafini2012}, the signed consensus dynamics are $\dot x = -{L}_sx$, i.e.,
$\dot x_i = -\sum_{j\in\mathcal{N}(i)} \left|{W}_{ij}\right|\left(x_i - \mathrm{sgn}({W}_{ij})x_j\right)$, 
where $\mathrm{sgn}$ indicates the sign function.
A \emph{gauge transformation} is a change of orthant order via a matrix $G_t \in \left\{ \mathrm{diag}(\sigma):~\sigma=[\sigma_1,\dots,\sigma_n]~,~\sigma_i = \pm 1\right\}$.
From the definition $G_t=G_t^T=G_t^{-1}$.
Let the \emph{gauge-transformed Laplacian} be given by ${L}_{G_t}= G_t{L}_sG_t={D} - G_t{A}_sG_t$ where
\begin{align}
\left({L}_{G_t}\right)_{ij} &= 
\begin{cases}
\sum_{k\in \mathcal{N}(i)} |{A}_{ik}|& j=i \\
-\sigma_i\sigma_jA_{ij} & j \neq i.
\end{cases}
\end{align}
In a similar way that we defined the functions $\phi$ and $\varphi$ for the graph automorphism, consider the function $\mathfrak{g}:\mathcal{V}\to\mathcal{V}$ encoding the action of the gauge on the nodes.
This induces a function $g:\mathbb{R}^n\to\mathbb{R}^n$ such that $[g(x)]_i = \sigma_i x_i$.
The gauge transformation is then the Jacobian of this function, in that $G_t = D g$.
\subsection{Nonlinear Dynamical Systems} 

We follow the same conventions as in \cite{Aguilar2014}.
Consider the controlled dynamical system
\begin{align}
	\label{eq:1}
	\dot{x}=F(x,u)
\end{align}
where $F:\mathbb{R}^n\times\mathbb{R}^m\rightarrow\mathbb{R}^n$ is a smooth mapping.
The \emph{accessible set} $\mathcal{A}(x_0,T)$ of the system \eqref{eq:1} from $x_0$ at time $T$ is the set of all end-points $\theta(T)$ where $\theta:[0,T]\rightarrow\mathbb{R}^n$ is a trajectory of \eqref{eq:1}.
The accessible set of \eqref{eq:1} from $x_0$ up to $T$ is defined as $\mathcal{A}(x_0,\leq T)\coloneqq\cup_{0\leq\tau\leq T}\mathcal{A}(x_0,\tau)$.
Then, the nonlinear dynamical system \eqref{eq:1} is said to be \emph{accessible} from the initial point $x_0$ if for every $T>0$ the set $\mathcal{A}(x_0,\leq T)$ contains a non-empty interior.


\subsection{Koopman Operator}

Consider the dynamical system in \eqref{eq:1} without control.
The \emph{Koopman operator} $\mathcal{K}$ acts on functions of the state space (called \emph{observables}) $\psi$ by the action $\mathcal{K}\psi=\psi \circ F$. 
The function $\varphi(x)$ is an \emph{eigenfunction} of $\mathcal{K}$ corresponding to the \emph{eigenvalue} $\mu$ if $\mathcal{K}\varphi(x)=e^{\mu t}\varphi(x)$.
For an observable function $g$ in the span of Koopman eigenfunctions, one can write $g(x)=\sum_{k=1}^{\infty} v_k\varphi_k(x)$, where the $v_k$'s are the \emph{Koopman modes}. 
For the case of full-state observable $g(x)=x$ the states can be reconstructed as, $x(t)=g(x(t))=\sum_{k=1}^{\infty} e^{\mu_k t}\varphi_k(x_0) v_k$, where we refer to $\{\mu_k,\varphi_k, v_k\}$ as the \emph{Koopman triple}.
A numerical approximation of Koopman operator can be obtained by EDMD~\cite{Williams2015}.


\section{PROBLEM STATEMENT}
\label{sec:problem-statement}

In this paper, we tackle two problems regarding nonlinear signed consensus networks.
In the first section, we extend the result in \cite{Aguilar2014} to the signed networks.
We examine the necessary conditions of uncontrollability in nonlinear signed network systems due to input and dynamics symmetry. 
In particular, we will show how the additional topological property of \emph{structural balance} in signed networks plays a key role in driving uncontrollability.

The following lemma from~\cite{Altafini2013} elucidates some properties of structural balance.


\begin{lemma}
	\label{lem:1}
	(See \cite{Altafini2013})
	The following statements are equivalent:
	\begin{enumerate}
		\item The signed graph $\mathcal{G}$ is structurally balanced;
		\item There exists a gauge transformation $G_t$ such that $G_tA_sG_t$ has only positive entries;
		\item For all cycles in $\mathcal{G}$, the product of the edge weights on the cycle are positive;
		\item The signed Laplacian $L_s$ has a zero eigenvalue;
		\item There exists a bipartition of $\mathcal{V}$ such that the edge weights on the edges within the same set are positive, and the edges connecting the two sets are negative.
	\end{enumerate}
\end{lemma}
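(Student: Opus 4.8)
The plan is to fold statements $(1)$, $(3)$, and $(5)$ together as the classical Harary-type characterizations of balance, and then close the loop through the algebraic statements via $(5)\Rightarrow(2)\Rightarrow(4)\Rightarrow(5)$, so that all five become mutually equivalent. The equivalence $(1)\Leftrightarrow(5)$ I would treat as definitional (structural balance \emph{is} the existence of such a bipartition), and for $(3)\Leftrightarrow(5)$ I would, assuming $\mathcal{G}$ connected, fix a root node $r$ and place $i$ in one block or the other according to whether the product of edge signs along a path from $r$ to $i$ equals $+1$ or $-1$. Well-definedness is the only subtlety: two paths to $i$ with opposite sign products would concatenate into a closed walk of negative sign product, which decomposes into cycles, at least one negative, contradicting $(3)$; conversely, traversing any cycle and tracking block membership shows it crosses between blocks an even number of times, so the product of its edge signs — hence of its weights, up to the positive factor $\prod|W_{ij}|$ — is positive.

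For $(5)\Rightarrow(2)$: set $\sigma_i=+1$ on one block of the bipartition, $\sigma_i=-1$ on the other, and $G_t=\mathrm{diag}(\sigma)$; then $[G_tA_sG_t]_{ij}=\sigma_i\sigma_j[A_s]_{ij}\ge 0$ in each case (same block: $\sigma_i\sigma_j=+1$ with a positive weight; different blocks: $\sigma_i\sigma_j=-1$ with a negative weight). For $(2)\Rightarrow(4)$: since a gauge transformation leaves the entrywise magnitudes — hence the degree matrix $D$ — unchanged, the hypothesis $G_tA_sG_t\ge 0$ makes $L_{G_t}=D-G_tA_sG_t$ an ordinary (nonnegatively weighted) graph Laplacian, whose rows sum to zero, so $L_{G_t}\mathbf{1}=0$; as $G_t=G_t^{-1}$, $L_s=G_tL_{G_t}G_t$ is similar to $L_{G_t}$ and therefore also has $0$ as an eigenvalue, with eigenvector $G_t\mathbf{1}$.

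The implication $(4)\Rightarrow(5)$ is the heart of the lemma, and the key tool is the quadratic-form identity
\[
L_s=\sum_{\{i,j\}\in\mathcal{E}}|W_{ij}|\,\bigl(e_i-\mathrm{sgn}(W_{ij})e_j\bigr)\bigl(e_i-\mathrm{sgn}(W_{ij})e_j\bigr)^{T},
\]
which one verifies by comparing diagonal and off-diagonal entries against $D-A_s$, and which exhibits $L_s\succeq 0$. Hence a zero eigenvalue of $L_s$ is equivalent to the existence of $v\ne 0$ with $v^{T}L_sv=\sum_{\{i,j\}\in\mathcal{E}}|W_{ij}|\,(v_i-\mathrm{sgn}(W_{ij})v_j)^2=0$, forcing $v_i=\mathrm{sgn}(W_{ij})v_j$ on every edge. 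By connectivity all $|v_i|$ then coincide with a common constant $c\ne 0$, so $v_i\in\{+c,-c\}$, and the bipartition $\mathcal{V}_1=\{i:v_i=c\}$, $\mathcal{V}_2=\{i:v_i=-c\}$ satisfies $(5)$ by reading the relation $v_i=\mathrm{sgn}(W_{ij})v_j$ edge by edge.

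The main obstacle, beyond assembling the quadratic-form identity cleanly, is the role of connectivity: for a disconnected $\mathcal{G}$ a single balanced component already produces a kernel vector of $L_s$, so statement $(4)$ must be read per connected component (equivalently, one assumes $\mathcal{G}$ connected, as is standard in the consensus setting). With that caveat, applying the argument componentwise completes $(4)\Rightarrow(5)$, and the cycle $(5)\Rightarrow(2)\Rightarrow(4)\Rightarrow(5)$ together with $(1)\Leftrightarrow(5)\Leftrightarrow(3)$ yields the full equivalence.
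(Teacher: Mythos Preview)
The paper does not prove this lemma; it is stated with a reference to \cite{Altafini2013} and used as background. There is thus no ``paper's own proof'' to compare against. Your argument is correct and supplies precisely the details the paper omits: the Harary-type combinatorial equivalences $(1)\Leftrightarrow(3)\Leftrightarrow(5)$, the gauge construction for $(5)\Rightarrow(2)$, similarity for $(2)\Rightarrow(4)$, and the positive-semidefinite quadratic-form identity for $(4)\Rightarrow(5)$. Your explicit flagging of the connectivity hypothesis is appropriate---the paper implicitly works under it throughout (consensus on a connected graph), and without it statement~$(4)$ indeed fails to imply the others.
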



In the second section of this paper, we show that a particular \emph{Koopman mode} from the EDMD approximation of the Koopman operator contains the sign structure corresponding to the bipartition in Lemma~\ref{lem:1}(5).
This extends the work by Pan \emph{et al.} who considered the equivalent problemn for linear signed consensus~\cite{Pan2016}.



\section{Nonlinear Controllability of Signed Networks}
\label{sec:nonl-contr-sign}

In this section, we extend previous work \cite{Aguilar2014} to analyze the controllability of nonlinear consensus protocols to the case where these protocols run on a signed network.
We consider three types of nonlinear consensus protocols, following the nomenclature in \cite{Altafini2013,Srivastava2011,Aguilar2014}:
\newcommand{\sgn}{{\mathrm{sgn}}}
\begin{itemize}
	\item \textbf{Absolute Nonlinear Flow}
	\begin{align}
	\dot x_i =- \sum_{j\in N_i} \left[f(x_i) - \sgn(a_{ij}) f(x_j)\right]
	\label{eq:5}
	\end{align}
	\item \textbf{Relative Nonlinear Flow}
	\begin{align}
	\dot x_i =- \sum_{j\in N_i} f\left(x_i - \sgn(a_{ij}) x_j\right)
	\label{eq:6}
	\end{align}
	\item \textbf{Disagreement Nonlinear Flow}
	\begin{align}
	\dot x_i =-  f\left(\sum_{j\in N_i} x_i - \sgn(a_{ij}) x_j\right)
	\label{eq:7}
	\end{align}
\end{itemize}

To make the paper self-contained, we provide two main theorems from \cite{Aguilar2014} which we use later to demonstrate uncontrollability.

%


\begin{theorem}
	\label{thm:3}
	Let $\mathcal{G}=(\mathcal{V},\mathcal{E})$ and $F:\mathbb{R}^n\rightarrow\mathbb{R}^n$ be a flow on $\mathcal{G}$. Assume $\varphi$ is a non-identity symmetry on $F$.
        Then, for any leader $l$, the leader-follower network flow on $\mathcal{G}$ induced by $l$ is not accessible from the origin in $\mathbb{R}^{n-1}$.
\end{theorem}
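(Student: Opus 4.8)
The plan is to show that the symmetry $\varphi$ on the full-state dynamics $F$ descends to the leader-follower system and forces its trajectories to remain on a proper submanifold, so the accessible set can have no interior. First I would set up the leader-follower decomposition: fix a leader $l$, partition the state as $x = (x_l, x_f)$ with $x_f \in \mathbb{R}^{n-1}$, and treat $x_l$ as the control input $u$; the resulting dynamics on the followers is $\dot{x}_f = \tilde{F}(x_f, u)$, obtained by restricting $F$. The key structural fact to extract is that the graph symmetry $\varphi$ (induced by a node permutation $\phi$ with Jacobian the permutation matrix $J$) that is assumed to be a symmetry of $F$ — i.e. $(D\varphi(x))F(x) = F(\varphi(x))$, equivalently $JF(x) = F(Jx)$ since $J$ is constant — can, for a \emph{leader-follower} flow induced by $l$, be taken to \emph{fix the leader node}, $\phi(l)=l$. (If $\phi(l)\neq l$ one argues the induced leader-follower flow inherits a symmetry that does fix $l$, or one restricts attention to the subgroup stabilizing $l$; this is the standard reduction in \cite{Rahmani2009a,Aguilar2014}.) With $\phi(l)=l$, the permutation $J$ has block form $\mathrm{diag}(1, P)$ where $P$ is a nontrivial permutation matrix on the $n-1$ follower coordinates, and the intertwining relation $JF(Jx) = F(x)$ restricts to $P\tilde{F}(P x_f, u) = \tilde{F}(x_f, u)$ — that is, $P$ is a genuine symmetry of the reduced system for every fixed value of the input $u$.

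Next I would invoke the invariance of the fixed-point set under the flow. Because $P$ is a symmetry of $\tilde{F}(\cdot,u)$ for each $u$, the linear subspace $\mathrm{Fix}(P) = \{x_f : P x_f = x_f\} \subsetneq \mathbb{R}^{n-1}$ is invariant under the dynamics: if $P x_f = x_f$ then $P\tilde{F}(x_f,u) = \tilde{F}(P x_f, u) = \tilde{F}(x_f, u)$, so the vector field is tangent to $\mathrm{Fix}(P)$ at every such point, and hence any trajectory starting in $\mathrm{Fix}(P)$ stays in it for all admissible control signals. Since $\phi$ is a non-identity permutation fixing $l$, $P \neq I_{n-1}$, so $\mathrm{Fix}(P)$ is a proper subspace of $\mathbb{R}^{n-1}$, of dimension at most $n-2$. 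The crucial point is then that the origin of $\mathbb{R}^{n-1}$ lies in $\mathrm{Fix}(P)$ (it is fixed by every linear map), so every trajectory emanating from the origin is confined to $\mathrm{Fix}(P)$, whence $\mathcal{A}(0,\le T) \subseteq \mathrm{Fix}(P)$ for all $T>0$. A proper linear subspace has empty interior in $\mathbb{R}^{n-1}$, so the accessibility condition fails and the system is not accessible from the origin.

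I expect the main obstacle to be the reduction step — justifying that the symmetry $\varphi$ of $F$ can be taken (or replaced) by one that fixes the chosen leader node $l$, and carefully verifying that the restriction of $F$ to the follower subsystem genuinely inherits the permutation symmetry $P$ uniformly in the input $u$. For the three specific flows \eqref{eq:5}--\eqref{eq:7} this is where the structural-balance hypothesis and the even/odd parity of the nonlinearity $f$ will actually enter in the later results; but for the abstract Theorem~\ref{thm:3} as stated, the work is purely in (i) showing the block-triangular structure of $J$ relative to $l$ and the induced intertwining relation for $\tilde F$, and (ii) confirming that $\mathrm{Fix}(P)$ contains the origin and is flow-invariant. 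Steps (i) and (ii) beyond the reduction are short linear-algebra and ODE-uniqueness arguments; the delicate bookkeeping is entirely in making the leader-node normalization rigorous.
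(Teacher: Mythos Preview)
The paper does not actually prove Theorem~\ref{thm:3}: it is quoted verbatim from \cite{Aguilar2014} (see the sentence immediately preceding the theorem, ``To make the paper self-contained, we provide two main theorems from \cite{Aguilar2014}\ldots''), and no proof is given here. So there is no in-paper argument to compare your proposal against.

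That said, your outline is the standard symmetry-trapping argument one expects from \cite{Aguilar2014}: the symmetry restricts to the follower subsystem, the fixed-point set $\mathrm{Fix}(P)$ is a proper flow-invariant linear subspace containing the origin, and hence the accessible set from the origin has empty interior. Your steps (ii) onward are correct and essentially complete.

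The one genuine gap is exactly the point you flag yourself: the theorem as stated here says ``for any leader $l$,'' yet your argument requires $\phi(l)=l$. Your proposed fix --- pass to the stabilizer of $l$ in the automorphism group --- does \emph{not} work in general: if $\varphi$ is the only nontrivial symmetry of $F$ and $\phi(l)\neq l$, the stabilizer of $l$ is trivial and you have no symmetry left to exploit. In fact the later Theorems~\ref{thm:4}--\ref{thm:6} in this paper all explicitly restrict to leaders $j\in\mathrm{Fix}(\varphi')$, which strongly suggests that the hypothesis $l\in\mathrm{Fix}(\varphi)$ is implicit (or present in the original statement in \cite{Aguilar2014}) and that the phrase ``for any leader $l$'' in the quoted statement is imprecise. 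Under that reading your reduction step is immediate and the proposal is complete; without it, the claim as literally stated is not provable by your method, and likely not true.
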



\begin{theorem}
	\label{thm:1}
	Let $\mathcal{G}=(\mathcal{V},\mathcal{E})$ and let $F:\mathbb{R}^n\rightarrow\mathbb{R}^n$ be the dynamics in any of \eqref{eq:5}-\eqref{eq:7}.
        Also, assume $\varphi$ be an automorphism of $\mathcal{G}$.
        Then $F$ is $\varphi$-invariant.
\end{theorem}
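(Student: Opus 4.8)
The plan is to verify the defining identity of $\varphi$-invariance, namely $(D\varphi(x))F(x)=F(\varphi(x))$ for all $x\in\mathbb{R}^n$, directly and componentwise. Recall from the preliminaries that the permutation $\phi$ underlying the automorphism induces $\varphi$ with $[\varphi(x)]_i=x_{\phi(i)}$ and that $D\varphi=J$, the permutation matrix with $[J]_{ij}=1$ iff $\phi(i)=j$. Hence $[JF(x)]_i=[F(x)]_{\phi(i)}$, and the theorem reduces to proving $[F(x)]_{\phi(i)}=[F(\varphi(x))]_i$ for every node $i$ and every state $x$.

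First I would record the two consequences of the automorphism hypothesis that the argument needs. From the equivalence that $\phi$ represents an automorphism of $\mathcal{G}$ if and only if $JA_s=A_sJ$, comparing entries gives $a_{\phi(i)\phi(j)}=a_{ij}$, so $\phi$ preserves adjacency, edge weights, and in particular edge signs; equivalently $\phi(N_i)=N_{\phi(i)}$ and $\sgn(a_{\phi(i)\phi(j)})=\sgn(a_{ij})$. This is the only place the signed structure enters: an automorphism of the underlying unsigned skeleton alone would not suffice, since it could flip signs.

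Next I would carry out the substitution for the absolute flow \eqref{eq:5}. Writing $y=\varphi(x)$, so that $y_i=x_{\phi(i)}$, we get
\begin{align}
[F(\varphi(x))]_i = -\sum_{j\in N_i}\big[f(x_{\phi(i)})-\sgn(a_{ij})f(x_{\phi(j)})\big].
\end{align}
Substituting $k=\phi(j)$, which by the previous paragraph is a bijection from $N_i$ onto $N_{\phi(i)}$ with $\sgn(a_{ij})=\sgn(a_{\phi(i)k})$, the right-hand side becomes $-\sum_{k\in N_{\phi(i)}}\big[f(x_{\phi(i)})-\sgn(a_{\phi(i)k})f(x_k)\big]=[F(x)]_{\phi(i)}$, which is what we wanted. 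The relative flow \eqref{eq:6} is handled identically, the only change being that $f$ now wraps each summand $x_i-\sgn(a_{ij})x_j$; the same reindexing $k=\phi(j)$ inside the argument of $f$ closes it. For the disagreement flow \eqref{eq:7}, $f$ wraps the entire sum $\sum_{j\in N_i}\big(x_i-\sgn(a_{ij})x_j\big)$, and since that sum transforms exactly as in the absolute case, so does $f$ of it.

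I do not expect a genuine obstacle here: once the reduction $[F(x)]_{\phi(i)}=[F(\varphi(x))]_i$ is in place, every step is bookkeeping. The one point that must be handled with care is the bijectivity of the reindexing together with preservation of edge signs --- i.e.\ that we are invoking the signed-graph automorphism condition $JA_s=A_sJ$ rather than a mere isomorphism of the unsigned graph --- and, more pedantically, keeping the direction of the permutation straight ($[J]_{ij}=1\iff\phi(i)=j$, hence $[JF(x)]_i=[F(x)]_{\phi(i)}$).
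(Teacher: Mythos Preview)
Your argument is correct and is the standard direct verification one would expect. Note, however, that the paper does not actually supply its own proof of this statement: Theorem~\ref{thm:1} (together with Theorem~\ref{thm:3}) is quoted from \cite{Aguilar2014} without proof, and is invoked later only after the signed dynamics have been gauge-transformed to unsigned ones. So there is nothing in the paper to compare your proof against beyond the citation.

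One remark on scope: you proved a slightly stronger statement than the paper needs. The paper's Theorem~\ref{thm:1} is really the Aguilar--Gharesifard result for \emph{unsigned} flows (all $\sgn(a_{ij})=+1$), and that is how it is used in the proofs of Theorems~\ref{thm:4}--\ref{thm:6}: first pass to the $z$-coordinates via $G_t$, obtain an unsigned flow, and only then apply Theorem~\ref{thm:1}. Your version covers the signed equations \eqref{eq:5}--\eqref{eq:7} directly, but to do so you must (and you do) assume the automorphism commutes with the \emph{signed} adjacency, $JA_s=A_sJ$, rather than merely with the unsigned skeleton. You flagged this distinction yourself; it is worth stating it as an explicit hypothesis if you keep the signed formulation, since the paper's preliminaries define automorphism only via the unsigned $A(\mathcal{G})$.
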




The behavior of the dynamics \eqref{eq:5}-\eqref{eq:7} clearly depends on the choice of $f:\mathbb{R}\to\mathbb{R}$.
In \cite{Altafini2013}, several classes of functions were considered.
First, the class of \emph{translated positive, infinite sector nonlinearities} $\mathcal{S}$ is defined as
\begin{align}
\mathcal{S} := \Big\{ f:&[f(x)-f(x^*)](x - x^*) >0~\text{for}~x\neq x^*\Big\},\label{eq:3}
\end{align}
where $\int_{x^*}^x f(t)dt \to \infty\text{ as }|x|\to\infty$ and $f(0)=0$;
see \cite{Kaszkurewicz2012} for properties of this class of functions.
A subset $\mathcal{S}_0\subset \mathcal {S}$ of these functions that will be used later is the \emph{untranslated positive, infinite sector nonlinearities} given by setting $x^*=0$ in the definition of $\mathcal{S}$.
The reason these classes of functions are interesting is that when combined with the dynamics introduced in \eqref{eq:5}-\eqref{eq:7}, \emph{clustering} occurs in a structurally balanced graph.
This is summarized in the following theorem.


\begin{theorem}
\label{thr:2}
	(\emph{Theorems 3 \& 4 in \cite{Altafini2013}})
	{Consider a graph $\mathcal{G}$.
	Assume either the dynamics \eqref{eq:5} with $f\in\mathcal{S}$ or the dynamics \eqref{eq:6} with $f\in\mathcal{S}_0$ running on $\mathcal{G}$.
        Then,
	$\lim_{t\to\infty}x(t) = \frac{1}{n} \left(\mathbf{1}^T G_tx(0)\right) G_t \mathbf{1}$
	if and only if $\mathcal{G}$ is structurally balanced (with gauge transformation $G_t$).}	
\end{theorem}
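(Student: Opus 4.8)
The plan is to treat the two implications separately. The forward direction proceeds by a \emph{gauge transformation} that turns the signed flow into an ordinary cooperative nonlinear consensus on a connected nonnegatively-weighted graph, followed by a Lyapunov/LaSalle convergence argument; the converse is an equilibrium-obstruction argument resting on Lemma~\ref{lem:1}.

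For ``structurally balanced $\Rightarrow$ clustering'', let $G_t=\mathrm{diag}(\sigma)$ be a gauge furnished by Lemma~\ref{lem:1}(2), so that $\mathrm{sgn}(a_{ij})=\sigma_i\sigma_j$ on every edge. I would substitute $x_i=\sigma_i y_i$ into \eqref{eq:5} (resp.\ \eqref{eq:6}); the identity $\mathrm{sgn}(a_{ij})\sigma_j=\sigma_i$ cancels all the sign factors and leaves, in the $y$-coordinates, a flow of the same type running on the connected nonnegatively-weighted graph $G_tA_sG_t$, but with the per-node nonlinearities $\tilde f_i(z):=\sigma_i f(\sigma_i z)$ in place of $f$. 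The first step is the elementary observation that each $\tilde f_i$ again lies in $\mathcal{S}$ (resp.\ in $\mathcal{S}_0$ when $x^*=0$): the strict sector inequality, the growth condition $\int\tilde f_i\to\infty$, and $\tilde f_i(0)=0$ all survive the map $z\mapsto\sigma_i f(\sigma_i z)$, and in fact $\tilde f_i=f$ when $f$ is odd, which is the case making the stated limit literally a weighted average. The second step is convergence of the resulting cooperative flow: one uses an energy $V(y)=\sum_i\int_0^{y_i}\tilde f_i(s)\,ds$ for \eqref{eq:5} (and a disagreement energy for \eqref{eq:6}), verifies $\dot V\le 0$ and compactness of the sublevel sets of $V$ --- this last point is precisely what the hypothesis $\int_{x^*}^{x}f\to\infty$ supplies --- and invokes LaSalle's invariance principle, for which one checks that the largest invariant set inside $\{\dot V=0\}$ is the consensus line $\{y=\beta\mathbf{1}\}$ (connectedness and the strict sector property enter here). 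The conserved quantity $\mathbf{1}^Ty=\mathbf{1}^TG_tx$ (invariant along \eqref{eq:5} directly, and along \eqref{eq:6} in the customary odd-$f$ setting) then pins $\beta=\tfrac1n\mathbf{1}^TG_tx(0)$, and undoing the gauge gives $x(t)\to\tfrac1n\bigl(\mathbf{1}^TG_tx(0)\bigr)G_t\mathbf{1}$.

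For the converse I would argue by contraposition, keeping the gauge $G_t=\mathrm{diag}(\sigma)$ that appears in the assumed limit. If that limit holds for every initial condition, then $x^\star=\alpha G_t\mathbf{1}$ must be an equilibrium of \eqref{eq:5} (resp.\ \eqref{eq:6}) for every $\alpha=\tfrac1n\mathbf{1}^TG_tx(0)$. Writing the equilibrium condition at $x^\star$, for instance $\sum_{j\in N_i}f\bigl(\alpha(\sigma_i-\mathrm{sgn}(a_{ij})\sigma_j)\bigr)=0$ for \eqref{eq:6}, note that $\sigma_i-\mathrm{sgn}(a_{ij})\sigma_j\in\{-2,0,2\}$, equal to $0$ precisely on the ``good'' edges where $\mathrm{sgn}(a_{ij})=\sigma_i\sigma_j$; since $f$ lies in the strict sector ($f(\pm2\alpha)$ has the sign of $\pm\alpha$, and $f$ cannot be even) a ``bad'' edge at node $i$ would make the $i$-th equation fail for generic $\alpha$. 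Hence there are no bad edges, i.e.\ $\mathrm{sgn}(a_{ij})=\sigma_i\sigma_j$ on all edges, which by Lemma~\ref{lem:1}(2),(5) is exactly structural balance. Equivalently: if $\mathcal{G}$ were not structurally balanced, Lemma~\ref{lem:1}(3) would provide a cycle with negative sign-product, no $\sigma$ could remove all bad edges, and the prescribed clustered limit could not hold for every initial datum.

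The main obstacle I anticipate is the nonlinear convergence step itself: choosing the right Lyapunov/energy function for each of the two flows, proving that the largest invariant subset of $\{\dot V=0\}$ collapses to the consensus line --- which requires coping with the possible non-injectivity of $f$, deducing $y_i=y_j$ from equality of $\tilde f$-values by combining the strict sector inequality with the conserved average --- and checking compactness of sublevel sets so that LaSalle's principle applies. The distinction between $\mathcal{S}$ and $\mathcal{S}_0$ is essential: for the relative flow \eqref{eq:6} a nonzero translation $x^*$ would introduce spurious equilibria with $x_i-x_j=x^*$, which is why $f\in\mathcal{S}_0$ is demanded there whereas $f\in\mathcal{S}$ suffices for \eqref{eq:5}.
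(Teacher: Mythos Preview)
The paper does not supply its own proof of this theorem: it is quoted verbatim as ``Theorems 3 \& 4 in \cite{Altafini2013}'' and left unproved, so there is no in-paper argument to compare your proposal against.

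For what it is worth, your sketch is precisely the strategy of the cited reference: apply the gauge $y=G_tx$ to strip the signs and land on a cooperative flow over the nonnegative graph $G_tA_sG_t$, then run a Lyapunov/LaSalle argument (with the integral energy you wrote for \eqref{eq:5} and a disagreement energy for \eqref{eq:6}) to obtain $y\to\beta\mathbf{1}$, pin $\beta$ via the conserved sum $\mathbf{1}^Ty$, and undo the gauge. Your observation that the per-node nonlinearities become $\tilde f_i(z)=\sigma_i f(\sigma_i z)$ and that these coincide with $f$ exactly when $f$ is odd is the correct bookkeeping, and your remark about why $\mathcal{S}_0$ (rather than $\mathcal{S}$) is required for the relative flow is also on point. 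The one place to be careful is the LaSalle step when $f$ is not odd: on the invariant set one only gets $\tilde f_i(y_i)=\tilde f_j(y_j)$ across edges, and with heterogeneous $\tilde f_i$ this does not immediately force $y_i=y_j$; Altafini handles this, and you flag it as the main obstacle, which is accurate. Your converse via an equilibrium obstruction at $\alpha G_t\mathbf{1}$ is a clean alternative to Altafini's argument (which instead shows $x(t)\to 0$ when balance fails).
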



According to this theorem, for certain classes of functions, the dynamics will converge to two different clusters.
These clusters are exactly those corresponding to the bipartite consensus condition in Lemma~\ref{lem:1}(5).

In the following subsections, we elaborate on the controllability of the dynamics \eqref{eq:5}-\eqref{eq:7} and show that a notion of symmetry about the input node, as well as structural balance, lead to uncontrollability.
From \cite{Altafini2013} we know if the underlying signed graph $\mathcal{G}$ is structurally balanced, then there exists a gauge transformation $G_t$ that acts as a similarity transformation on the adjacency matrix of $\mathcal{G}$ in that $G_t A_s(\mathcal{G})G_t = A$ where $A$ is the unsigned adjacency matrix.
We will show that $G_t$ defines a useful coordinate transformation that allows an immediate application of the uncontrollability test in \cite{Aguilar2014}.

First, we discuss the notion of symmetry in signed graphs.
\begin{definition}
	\label{def:1}
	Let $\varphi$ be a non-identity automorphism on graph $\mathcal{G}$.
	Suppose that this graph is structurally balanced, with gauge transformation $G_t$ induced by the function $g:\mathbb{R}^n\to\mathbb{R}^n$ defined by $[g(x)]_i = \sigma_i x_i$.
	Then, we define \emph{signed automorphism} operator as $\varphi'=g\circ\varphi\circ g$. 
	Moreover, assume that $J$ is the matrix representation of the permutation operator $\varphi$, in that $J=D\varphi$.
	Then, the analogous matrix $J'=G_tJG_t$ is the matrix representation of the signed permutation operator $\varphi'$, in that $J'=D(g\circ \varphi \circ g)$.
\end{definition}
By Definition \ref{def:1},  if $\varphi(x_i)=x_r$, then $\varphi'(x_i)=\sigma_i\sigma_r x_r$. 
For example, consider the graph in Figure~\ref{fig:1}.
\begin{figure}[H]
	\centering
	\includegraphics[scale=0.65]{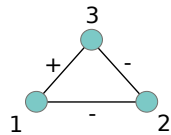}
	\caption{Example of a signed automorphism}
	\label{fig:1}
\end{figure}
The corresponding gauge transformation and automorphisms are defined as
\begin{gather*}
G_t=\begin{bmatrix}
1&0&0\\
0&-1&0\\
0&0&1
\end{bmatrix}
\ , \ 
J=\begin{bmatrix}
0&1&0\\
1&0&0\\
0&0&1
\end{bmatrix}\\
J'=G_tJG_t=\begin{bmatrix}
0&-1&0\\
-1&0&0\\
0&0&1
\end{bmatrix}.
\end{gather*}
This implies that $\varphi'(x_1,x_2,x_3)=(-x_2,-x_1,x_3)$.
Recall that we use $\phi(i)$ as the action of the automorphism on the index of a node rather than the more obscure notation $\varphi(v_i)$.
For example, in Figure \ref{fig:1} we have $\phi(1)=2,~\phi(2)=1$.
We can now proceed to the main results of this paper, which are established case-by-case for each of the dynamics in Equations~\eqref{eq:5}-\eqref{eq:7}.

\subsubsection{Absolute Nonlinear Flow}


In the following theorem, we will show that for absolute nonlinear flow with odd functions $f$, structural balance directly generalizes the uncontrollability conditions in \cite{Aguilar2014}.
For the case of even functions, we need to impose additional topological structure on the edge weights of the underlying graph.


\begin{theorem}
	\label{thm:4}
	Consider a structurally balanced graph $\mathcal{G}$ with gauge transformation $G_t$ and \emph{absolute nonlinear flow} dynamics~\eqref{eq:5}.
	Further suppose $\mathcal{G}$ has a non-trivial signed automorphism $\varphi'$.
	Let $f:\mathbb{R}^n\to \mathbb{R}$ be a smooth odd function (for example, odd $f\in\mathcal{S}_0$ with $f$ smooth).
	Then, for any vertex $j\in\mathrm{Fix}(\varphi')$ chosen as the leader, the leader-follower network is not accessible from the origin in $\mathbb{R}^{n-1}$.
	Moreover, the same results holds for smooth even functions $f$ if $\varphi'$ preserves edge signs, in that $\mathrm{sgn}(a_{ij})=\mathrm{sgn}(a_{\phi(i)\phi(j)})$.
\end{theorem}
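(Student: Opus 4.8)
The plan is to use the gauge transformation guaranteed by structural balance (Lemma~\ref{lem:1}(2)) to straighten the signed dynamics into an (essentially) unsigned flow, and then invoke the unsigned results Theorems~\ref{thm:1} and~\ref{thm:3}. By Lemma~\ref{lem:1}(2), choose $G_t=\mathrm{diag}(\sigma)$ with $G_tA_sG_t=A$ having only nonnegative entries, equivalently $\sgn(a_{ij})=\sigma_i\sigma_j$ on every edge $ij$. Set $y=G_tx$; since $G_t=G_t^{-1}$ we have $x_i=\sigma_iy_i$, hence $\dot x_i=\sigma_i\dot y_i$. Substituting into~\eqref{eq:5} and using $\sgn(a_{ij})=\sigma_i\sigma_j$: if $f$ is odd then $f(\sigma_iy_i)=\sigma_if(y_i)$ and the $y$-dynamics reduces to the \emph{unsigned} absolute nonlinear flow $\dot y_i=-\sum_{j\in N_i}\big(f(y_i)-f(y_j)\big)$ on $\mathcal{G}$; if $f$ is even then $f(\sigma_iy_i)=f(y_i)$ and one instead gets $\dot y_i=-\sum_{j\in N_i}\big(\sigma_if(y_i)-\sigma_jf(y_j)\big)$.

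For the odd case, $\varphi$ (the underlying automorphism of $\mathcal{G}$ in Definition~\ref{def:1}) is an automorphism of $\mathcal{G}$, so Theorem~\ref{thm:1} applied to the unsigned flow above shows it is $\varphi$-invariant, and since $\varphi$ is non-identity it is a non-identity symmetry of the $y$-dynamics. The leader $j\in\mathrm{Fix}(\varphi')$ is a fixed vertex of the permutation $\phi$ (the state sign flips in $\varphi'=g\circ\varphi\circ g$ do not permute vertices), hence $j\in\mathrm{Fix}(\varphi)$, and Theorem~\ref{thm:3} yields that the $y$ leader--follower system with leader $j$ is not accessible from the origin in $\mathbb{R}^{n-1}$. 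Since $y=G_tx$ is a diagonal (hence orthogonal, invertible) change of coordinates fixing the origin and preserving the leader--follower structure --- the leader node $j$ remains the single actuated node, the control entering as $\dot y_j=\sigma_ju$ --- accessibility is preserved, so the original signed leader--follower flow is not accessible from the origin in $\mathbb{R}^{n-1}$.

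For the even case, combining the hypothesis $\sgn(a_{ij})=\sgn(a_{\phi(i)\phi(j)})$ with $\sgn(a_{ij})=\sigma_i\sigma_j$ gives $\sigma_i\sigma_j=\sigma_{\phi(i)}\sigma_{\phi(j)}$ on every edge, so $\sigma_i\sigma_{\phi(i)}$ is constant on each connected component; evaluating it at the fixed vertex $j$ of $\phi$ gives $\sigma_j^2=1$, forcing $\sigma_{\phi(i)}=\sigma_i$ throughout, i.e.\ the gauge commutes with the permutation and $\varphi'=\varphi$. Then a direct computation shows $\dot y_i=-\sum_{j\in N_i}\big(\sigma_if(y_i)-\sigma_jf(y_j)\big)$ is $\varphi$-invariant: under $y\mapsto\varphi(y)$ the sum over $N_i$ maps to a sum over $N_{\phi(i)}$ because $\phi$ permutes neighborhoods, and all coefficients $\sigma_i$ match because $\sigma$ is $\phi$-invariant. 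From here the argument (Theorem~\ref{thm:3} with leader $j\in\mathrm{Fix}(\varphi)$, which as a vertex set coincides with $\mathrm{Fix}(\varphi')$, then transport back through $G_t$) is identical to the odd case.

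I expect the even case to be the main obstacle. Unlike the odd case, the gauge transformation does not return the canonical unsigned flow of~\cite{Aguilar2014} --- the residual coefficients $\sigma_i$ survive --- so Theorem~\ref{thm:1} cannot simply be quoted and $\varphi$-invariance must be re-established by hand; it is precisely the edge-sign-preservation hypothesis (together with a fixed leader vertex) that makes the $\phi$-permuted defining sum coincide with the original. A secondary, more routine point is to verify that the vertex condition $j\in\mathrm{Fix}(\varphi')$ in the signed picture is exactly the hypothesis needed to invoke Theorem~\ref{thm:3} after passing to $y$-coordinates, and that the diagonal change of variables genuinely carries the single-input leader--follower structure across.
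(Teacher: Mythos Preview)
Your odd-case argument is exactly the paper's: the gauge change $z=G_tx$ reduces~\eqref{eq:5} to the unsigned absolute flow, and then Theorems~\ref{thm:1} and~\ref{thm:3} apply verbatim.

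For the even case your route differs from the paper's. The paper stays in the original $x$-coordinates and checks $\varphi'$-invariance of $F$ directly: using evenness to kill the $\sigma$'s inside $f$, it computes $F_i(\varphi'(x))=-\sum_{l\in N_r}\big(f(x_r)-\sgn(a_{ij})f(x_l)\big)$ and compares with $F_r(x)$, so the edge-sign hypothesis $\sgn(a_{ij})=\sgn(a_{\phi(i)\phi(j)})$ closes the identity. You instead pass to $y=G_tx$, obtain the residual-sign flow $\dot y_i=-\sum_{j\in N_i}(\sigma_if(y_i)-\sigma_jf(y_j))$, and then extract the stronger structural fact that the edge-sign hypothesis together with a $\phi$-fixed leader forces $\sigma_{\phi(i)}=\sigma_i$ on the (connected) graph, i.e.\ $\varphi'=\varphi$ and the gauge commutes with the automorphism; $\varphi$-invariance of the $y$-flow is then immediate. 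Your approach buys an explanatory bonus --- it shows that under the stated hypotheses the signed symmetry is actually unsigned --- and it also makes transparent the Jacobian factor $\sigma_i\sigma_{\phi(i)}$ that the invariance identity $(D\varphi')F=F\circ\varphi'$ carries, which in the paper's direct computation is handled implicitly. The cost is that you need connectivity of $\mathcal{G}$ (standard here, and already implicit in Lemma~\ref{lem:1}) and the existence of a $\phi$-fixed vertex (guaranteed by the leader hypothesis), whereas the paper's line-by-line verification uses only the edge-sign condition on each term.
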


\begin{proof}
    Following Equation~\eqref{eq:1}, let $F$ denote the network flow and assume the dynamics in \eqref{eq:5}.
	We will first note that for smooth odd functions $f$, a convenient coordinate transformation yields unsigned dynamics.
	Let $z=G_tx$, or $z_i=\sigma_i x_i$.
	Then, following \cite{Altafini2012} we get the equivalent dynamics
	\begin{align*}
		\dot{z}_i&=-\sigma_i\sum_{j\in\mathcal{N}_i}f(\sigma_i z_i)-\mathrm{sgn}(a_{ij})f(\sigma_j z_j) \\
		&=-\sigma_i\sum_{j\in\mathcal{N}_i}\sigma_if(z_i)-\mathrm{sgn}(a_{ij})\sigma_jf(z_j) \\
		&=-\sum_{j\in\mathcal{N}_i}f(z_i)-f(z_j),
	\end{align*}
	which is an unsigned absolute nonlinear flow.
	Then, from Theorems \ref{thm:3} and \ref{thm:1} we conclude the system is inaccessible from the origin.
	
	Now, suppose $f$ is an even function. 
        From \eqref{eq:5} we have
	\begin{align*}
		F_i(\varphi'(x))&=-\sum_{l\in N_r}f(\sigma_i\sigma_rx_r)-\sgn(a_{ij})f(\sigma_j\sigma_lx_l) \\
		&=-\sum_{l\in N_r}f(x_r)-\sgn(a_{ij})f(x_l),
	\end{align*}
	where $r=\phi(i)$ and $l=\phi(j)$ and the property $f(\sigma_i\sigma_j x)=f(x)$ of even functions is used.
	
	On the other hand, we know from the above identity that\\ 
		$F_{\phi(i)}(x)=F_r(x)=-\sum_{l\in N_r}\left[f(x_r)-\sgn(a_{rl})f(x_l)\right].$
	Hence, $F$ is $\varphi'$-invariant if $\mathrm{sgn}(a_{ij})=\mathrm{sgn}(a_{rl})$.
\end{proof}
The condition on even functions in Theorem \ref{thm:4} can be interpreted as an edge-sign symmetry of the graph, in that edge signs remain invariant under the signed automorphism.
\subsubsection{Relative Nonlinear Flow}

The main result of this section shows that structural balance and the existence of the non-trivial signed automorphism lead to the uncontrollability of the relative nonlinear flow.

\begin{theorem}
	\label{thm:5}
	Consider a structurally balanced graph $\mathcal{G}$ with gauge transformation $G_t$ and \emph{relative nonlinear flow} dynamics.
	Further suppose $\mathcal{G}$ has a non-trivial automorphism $\varphi'$.
	Let $f:\mathbb{R}^n\to \mathbb{R}$ be a smooth odd or even function (for example, odd $f\in\mathcal{S}_0$ with $f$ smooth).
	Then, for any vertex $j\in\mathrm{Fix}(\varphi')$ chosen as the leader, the leader-follower network is not accessible from the origin in $\mathbb{R}^{n-1}$.
\end{theorem}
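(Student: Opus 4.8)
The plan is to follow the blueprint of Theorem~\ref{thm:4}: use the gauge transformation $G_t$ to trade the signed relative flow for an (essentially) unsigned flow on $\mathcal{G}$, and then quote Theorems~\ref{thm:1} and~\ref{thm:3}. The one algebraic fact used throughout is that, by Lemma~\ref{lem:1}(2), structural balance forces $\mathrm{sgn}(a_{ij})=\sigma_i\sigma_j$ on every edge $ij$ (read off the $(i,j)$ entry of $G_tA_sG_t$), equivalently $\mathrm{sgn}(a_{ij})\sigma_j=\sigma_i$.

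First I would dispatch the case of odd $f$. Setting $z=G_tx$, i.e.\ $z_i=\sigma_ix_i$, and substituting $x_i=\sigma_iz_i$ together with $\mathrm{sgn}(a_{ij})x_j=\sigma_iz_j$ into \eqref{eq:6} gives
\begin{align*}
\dot z_i=\sigma_i\dot x_i=-\sigma_i\sum_{j\in N_i}f\big(\sigma_i(z_i-z_j)\big)=-\sum_{j\in N_i}f(z_i-z_j),
\end{align*}
the last equality using $f(\sigma_iy)=\sigma_if(y)$ for odd $f$. This is precisely the unsigned relative nonlinear flow \eqref{eq:6} on $\mathcal{G}$, hence by Theorem~\ref{thm:1} it is $\varphi$-invariant for the non-identity automorphism $\varphi$ underlying $\varphi'$ (non-identity by Definition~\ref{def:1}). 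Since $G_t$ is diagonal, $\mathrm{Fix}(\varphi')=\{j:\phi(j)=j\}=\mathrm{Fix}(\varphi)$, and selecting a leader $j\in\mathrm{Fix}(\varphi')$ leaves it a leader node in the $z$-coordinates; Theorem~\ref{thm:3} then gives that the $z$-coordinate leader--follower flow is not accessible from the origin of $\mathbb{R}^{n-1}$. As $z=G_tx$ is a linear bijection fixing the origin and the leader coordinate, accessibility is unaffected, so the signed leader--follower flow is not accessible from the origin either.

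For even $f$ the same substitution yields instead $\dot z_i=-\sigma_i\sum_{j\in N_i}f(z_i-z_j)$, because now $f(\sigma_i(z_i-z_j))=f(z_i-z_j)$. What remains is to certify that this field --- equivalently, the original $F$ in the $x$-coordinates --- is still invariant under $\varphi'$, whose Jacobian is $J'=G_tJG_t$. I would check this by direct substitution of $[\varphi'(x)]_k=\sigma_k\sigma_{\phi(k)}x_{\phi(k)}$ into \eqref{eq:6}: pull the common factor $\sigma_i$ out of the argument of $f$ via $\mathrm{sgn}(a_{ij})\sigma_j=\sigma_i$, discard it by evenness, reindex $j\mapsto\phi(j)$ over $N_{\phi(i)}$, and finally reinstate the edge signs of the image via $\sigma_{\phi(i)}\sigma_{\phi(j)}=\mathrm{sgn}(a_{\phi(i)\phi(j)})$ (structural balance again) to recover $F_{\phi(i)}(x)$. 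In contrast with the absolute flow of Theorem~\ref{thm:4}, the sign $\mathrm{sgn}(a_{ij})$ here sits inside the argument of $f$, so no separate edge-sign-preservation hypothesis should be required --- structural balance already supplies the needed sign identities both on $\mathcal{G}$ and on its image under $\phi$. With $\varphi'$-invariance established, Theorem~\ref{thm:3}, applied with the symmetry $\varphi'$ and leader $j\in\mathrm{Fix}(\varphi')$, closes the even case.

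The step I expect to be the genuine obstacle is exactly this sign bookkeeping for even $f$: after gauging, the transformed field carries a leftover diagonal sign $\mathrm{diag}(\sigma_i)$ and is no longer a ``pure'' unsigned flow, so Theorem~\ref{thm:1} cannot be quoted verbatim and one must verify by hand that the leftover signs are reabsorbed compatibly with $J'=G_tJG_t$ --- for odd $f$ this cancellation is automatic, which is what makes that case immediate. The remaining ingredients --- the reduction of accessibility under a linear change of coordinates, and identifying $\mathrm{Fix}(\varphi')$ with $\mathrm{Fix}(\varphi)$ --- are routine.
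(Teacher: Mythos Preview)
Your proposal is correct and follows essentially the same approach as the paper: for odd $f$, the gauge transformation $z=G_tx$ reduces the signed relative flow to the unsigned one so that Theorems~\ref{thm:1} and~\ref{thm:3} apply directly; for even $f$, you verify $\varphi'$-invariance by hand via the identity $\sigma_i\sigma_j=\mathrm{sgn}(a_{ij})$ from structural balance, exactly as the paper does. Your additional remarks---that $\mathrm{Fix}(\varphi')=\mathrm{Fix}(\varphi)$ and that the linear bijection $G_t$ preserves accessibility from the origin---are points the paper leaves implicit but which are indeed needed to make the argument complete.
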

\begin{proof}
	Let the same notations as in proof of Theorem \ref{thm:4} hold.
	We will show that both cases of odd and even functions $f$ lead to $\varphi'$-invariance of the flow $F$ and therefore the inaccessibility from the origin.
	
	Let $f$ be an odd function.
        Changing the coordinates by $z=G_tx$ yields
	\begin{align*}
		\dot{z}_i&=-\sigma_i\sum_{j\in\mathcal{N}_i}f(\sigma_i z_i-\mathrm{sgn}(a_{ij})\sigma_j z_j) \\
		&=-\sigma_i\sum_{j\in\mathcal{N}_i}f(\sigma_i(z_i-\sigma_i\mathrm{sgn}(a_{ij})\sigma_j z_j)) \\
		&=-\sum_{j\in\mathcal{N}_i}f(z_i-z_j),
	\end{align*}
	which is the unsigned relative nonlinear flow.
	Hence, $F$ is $\varphi'$-invariant and inaccessibility from the origin follows from Theorems \ref{thm:3} and \ref{thm:1}.
	
	For an even function $f$, from \eqref{eq:6}
	\begin{align}
		&F_i(\varphi'(x))=-\sum_{l\in\mathcal{N}_{r}} f(\sigma_i\sigma_rx_r-\sigma_j\sigma_l\mathrm{sgn}(a_{ij})x_{l})\\
		&=-\sum_{l\in\mathcal{N}_{r}} f(\sigma_i\sigma_r(x_r-\sigma_r\sigma_l\sigma_i\sigma_j\mathrm{sgn}(a_{ij})x_{l}))\\
				&=-\sum_{l\in\mathcal{N}_{r}} f(x_r-\sigma_r\sigma_lx_{l})
				=-\sum_{l\in\mathcal{N}_{r}} f(x_r-\mathrm{sgn}(a_{rl})x_{l})
	\end{align}
	where the last display is equal to $F_r(x)$, and we have used the fact that $\sigma_i\sigma_j\mathrm{sgn}(a_{ij})>0$ hence $\sigma_i\sigma_j=\mathrm{sgn}(a_{ij})$ for all $i$ and $j\in\mathcal{N}_i$.
\end{proof}
\subsubsection{Disagreement Nonlinear Flow}

\begin{theorem}
	\label{thm:6}
	Consider a structurally balanced graph $\mathcal{G}$ with gauge transformation $G_t$ and \emph{disagreement nonlinear flow} dynamics.
	Further suppose $\mathcal{G}$ has a non-trivial automorphism $\varphi'$.
        Let $f:\mathbb{R}^n\to \mathbb{R}$ be a smooth odd or even function (for example, odd $f\in\mathcal{S}_0$ with $f$ smooth).
	Then, for any vertex $j\in\mathrm{Fix}(\varphi')$ chosen as the leader, the leader-follower network is not accessible from the origin in $\mathbb{R}^{n-1}$.
\end{theorem}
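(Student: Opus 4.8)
The plan is to mirror the proofs of Theorems~\ref{thm:4} and~\ref{thm:5}: split into the odd and even cases for $f$, and in each case reduce the problem to a direct application of Theorems~\ref{thm:3} and~\ref{thm:1}. Throughout I will use two facts. First, structural balance (Lemma~\ref{lem:1}(2)) forces the gauge signs to satisfy $\sigma_i\sigma_j=\mathrm{sgn}(a_{ij})$ whenever $j\in\mathcal{N}_i$, equivalently $\mathrm{sgn}(a_{ij})\sigma_j=\sigma_i$. Second, $\varphi'=g\circ\varphi\circ g$ is a non-identity map (since $\varphi$ is non-trivial and $g$ is invertible), with $[\varphi'(x)]_i=\sigma_i\sigma_{\phi(i)}x_{\phi(i)}$; in particular a vertex $j$ with $\phi(j)=j$ satisfies $[\varphi'(x)]_j=\sigma_j^2x_j=x_j$, so ``$j\in\mathrm{Fix}(\varphi')$'' is just $\phi(j)=j$, which is unaffected by any diagonal coordinate change.

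For smooth odd $f$, I would first apply the coordinate change $z=G_tx$ (so $z_i=\sigma_i x_i$). Substituting $x_i=\sigma_i z_i$ into \eqref{eq:7} and using $\mathrm{sgn}(a_{ij})\sigma_j=\sigma_i$, each summand $x_i-\mathrm{sgn}(a_{ij})x_j$ becomes $\sigma_i(z_i-z_j)$, so the argument of $f$ factors as $\sigma_i\sum_{j\in\mathcal{N}_i}(z_i-z_j)$; oddness of $f$ pulls the $\sigma_i$ out and the leading $-\sigma_i$ cancels it, leaving $\dot z_i=-f\!\left(\sum_{j\in\mathcal{N}_i}(z_i-z_j)\right)$, the unsigned disagreement flow on the underlying graph. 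By Theorem~\ref{thm:1} (which covers all of \eqref{eq:5}--\eqref{eq:7}) this unsigned flow is $\varphi$-invariant, and $\varphi$ is a non-identity automorphism, so Theorem~\ref{thm:3} gives inaccessibility from the origin in $\mathbb{R}^{n-1}$; the admissible leader $j\in\mathrm{Fix}(\varphi')$ is unchanged by the coordinate change.

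For smooth even $f$, I would instead verify $\varphi'$-invariance of $F$ directly, without changing coordinates. Writing $r=\phi(i)$ and $l=\phi(j)$ and substituting $[\varphi'(x)]_i=\sigma_i\sigma_r x_r$, $[\varphi'(x)]_j=\sigma_j\sigma_l x_l$ into \eqref{eq:7}, a summand becomes $\sigma_i\sigma_r x_r-\mathrm{sgn}(a_{ij})\sigma_j\sigma_l x_l=\sigma_i\sigma_r\bigl(x_r-\mathrm{sgn}(a_{rl})x_l\bigr)$, using $\mathrm{sgn}(a_{ij})\sigma_j=\sigma_i$ and $\sigma_r\sigma_l=\mathrm{sgn}(a_{rl})$ (the latter valid since $l\in\mathcal{N}_r$ because $\varphi$ is an automorphism). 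Summing over $j\in\mathcal{N}_i$, equivalently over $l\in\mathcal{N}_r$, the common factor $\sigma_i\sigma_r$ comes outside the sum and hence outside the single outer $f$; evenness of $f$ deletes it, yielding $F_i(\varphi'(x))=-f\!\left(\sum_{l\in\mathcal{N}_r}(x_r-\mathrm{sgn}(a_{rl})x_l)\right)=F_{\phi(i)}(x)$, which is exactly $\varphi'$-invariance. Theorem~\ref{thm:3}, applied with the non-identity symmetry $\varphi'$ and leader $j\in\mathrm{Fix}(\varphi')$, then concludes the proof.

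I expect the only real friction to be the sign bookkeeping in the even case: one must check that the factors $\sigma_i\sigma_r$, $\mathrm{sgn}(a_{ij})$, $\sigma_j\sigma_l$, $\sigma_r\sigma_l$ all collapse — via structural balance together with the automorphism property $j\in\mathcal{N}_i\iff\phi(j)\in\mathcal{N}_{\phi(i)}$ — to a single global sign $\sigma_i\sigma_r$ that factors completely out of the lone outer $f$. This is precisely why the disagreement flow needs no extra edge-sign hypothesis, in contrast with the even-$f$ case of Theorem~\ref{thm:4}, where $\mathrm{sgn}(a_{ij})$ sits between the $f$-terms and cannot be absorbed. A minor point to state cleanly is the identification of $\mathrm{Fix}(\varphi')$ with $\mathrm{Fix}(\varphi)$ at the vertex level, so that the same leader $j$ is admissible before and after the coordinate change in the odd case.
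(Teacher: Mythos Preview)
Your proposal is correct and follows exactly the approach the paper takes: the paper's proof of Theorem~\ref{thm:6} consists of the single sentence ``The proof is identical to that of Theorem~\ref{thm:5},'' and your write-up is precisely the disagreement-flow analogue of that argument, splitting into the odd case (gauge coordinate change $z=G_tx$ reducing to the unsigned flow, then Theorems~\ref{thm:3} and~\ref{thm:1}) and the even case (direct verification of $\varphi'$-invariance via the structural-balance identity $\sigma_i\sigma_j=\mathrm{sgn}(a_{ij})$). Your additional remarks on why no edge-sign hypothesis is needed here and on the identification of $\mathrm{Fix}(\varphi')$ with $\mathrm{Fix}(\varphi)$ are correct and go slightly beyond what the paper spells out.
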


\begin{proof}
  The proof is identical to that of Theorem \ref{thm:5}.
\end{proof}


\begin{remark}
	The analysis of this section demonstrates that for all of the three nonlinear flows \eqref{eq:5}-\eqref{eq:7} structural balance in addition to $\varphi'$-invariance leads to system uncontrollability for even and odd functions $f$.
	The only exception is when the absolute nonlinear flow $f$ is even.
	In this case, an edge-sign symmetry condition is also required.
\end{remark}


\section{BIPARTITE IDENTIFICATION WITH THE KOOPMAN OPERATOR AND EDMD}
\label{sec:bipart-ident-with}

In this section, we extend the data-driven approach by Pan \emph{et al.} in using data-driven methods to identify the bipartite consensus for some of the nonlinear network flows considered in the preceding section.
Our main result asserts that the Koopman mode corresponding to the zero eigenvalue of the Koopman operator contains the sign structure indicating the bipartite consensus.

\begin{theorem}
  \label{thr:1}
  Consider either the dynamics \eqref{eq:5} with $f\in \mathcal{S}$ or the dynamics \eqref{eq:6} with $f\in\mathcal{S}_0$.
  Recall that the full-state observable can be written in terms of the Koopman triple as 
  \begin{align}
    x(t) = \sum_{j=1}^\infty e^{\lambda_j t} \varphi_j(x_0) v_j.\label{eq:2}
  \end{align}
  If the underlying graph is structurally balanced, then the following hold:
  \begin{enumerate}
    \item  $\lambda_j \leq 0$ with a unique zero eigenvalue $\lambda_1$.
    \item  The sign structure of the corresponding Koopman mode $v_1$ displays the bipartition of nodes denoted in Lemma~\ref{lem:1}(5).
  \end{enumerate}
  
\end{theorem}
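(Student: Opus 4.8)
The plan is to leverage Theorem~\ref{thr:2} to pin down the asymptotic behavior of the full-state observable, and then read off the Koopman triple from that limit. First I would recall that under the stated hypotheses (dynamics \eqref{eq:5} with $f\in\mathcal{S}$, or \eqref{eq:6} with $f\in\mathcal{S}_0$), Theorem~\ref{thr:2} gives $\lim_{t\to\infty}x(t) = \tfrac{1}{n}\left(\mathbf{1}^T G_t x(0)\right) G_t\mathbf{1}$ precisely when $\mathcal{G}$ is structurally balanced. Comparing this with the Koopman expansion \eqref{eq:2}, every term with $\lambda_j$ having strictly negative real part decays, and (assuming no purely imaginary nonzero eigenvalues, which I would justify from the gradient-like / contraction structure in Theorem~\ref{thr:2}, since the trajectory converges to a single point rather than oscillating) the surviving term is the one with $\lambda_1 = 0$. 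Thus $e^{\lambda_1 t}\varphi_1(x_0) v_1 = \varphi_1(x_0) v_1$ must equal $\tfrac{1}{n}\left(\mathbf{1}^T G_t x(0)\right) G_t\mathbf{1}$ for all initial conditions in the relevant domain. This forces (up to the usual scalar normalization ambiguity between $\varphi_1$ and $v_1$) the Koopman mode $v_1$ to be proportional to $G_t\mathbf{1}$ and the eigenfunction $\varphi_1(x_0)$ to be proportional to the linear functional $\mathbf{1}^T G_t x_0$.

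For part (1), I would argue the ordering/nonpositivity of the $\lambda_j$ as follows: convergence of $x(t)$ to a finite limit for every admissible $x_0$ rules out any eigenvalue with positive real part (such a mode would blow up) and rules out any nonzero eigenvalue on the imaginary axis (such a mode would produce persistent oscillations, contradicting convergence to a point). That the zero eigenvalue is \emph{unique} (i.e. simple) follows because the limit map $x_0 \mapsto \tfrac{1}{n}(\mathbf{1}^T G_t x_0) G_t\mathbf{1}$ has rank one: if there were a second independent eigenfunction at $\lambda=0$, the set of limit points would span a space of dimension $\geq 2$, contradicting the explicit rank-one formula. For part (2), since $G_t = \mathrm{diag}(\sigma)$ with $\sigma_i = \pm 1$, the vector $v_1 \propto G_t\mathbf{1}$ has $i$th entry $\sigma_i$; by Lemma~\ref{lem:1}, the partition $\{i : \sigma_i = +1\}$ versus $\{i : \sigma_i = -1\}$ is exactly the bipartition in Lemma~\ref{lem:1}(5) (edges within a block positive, edges across blocks negative). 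Hence the sign pattern of $v_1$ recovers the bipartition.

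I expect the main obstacle to be making the identification between the asymptotic limit and the Koopman mode rigorous, rather than heuristic. Two technical points need care. First, the Koopman expansion \eqref{eq:2} is only valid for observables in the closure of the span of eigenfunctions, and a priori the full-state observable $x$ need not lie in that span for a general nonlinear flow; I would either restrict attention to the regime where EDMD/the Koopman formalism is known to apply (as in \cite{Williams2015, Pan2016}) or invoke the convergence result of Theorem~\ref{thr:2} directly as justification that the relevant spectral picture is as claimed. Second, I must handle the scalar ambiguity: $\varphi_1 \mapsto c\varphi_1$, $v_1\mapsto c^{-1}v_1$ leaves \eqref{eq:2} unchanged, so $v_1$ is only determined up to a nonzero scalar — but this does not affect the \emph{sign structure} claim as long as the scalar is real (which it is, since both $G_t\mathbf{1}$ and the functional $\mathbf{1}^T G_t x_0$ are real), so at worst $v_1 \propto \pm G_t\mathbf{1}$, and the induced bipartition is the same. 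With these caveats addressed, both conclusions follow directly from Theorem~\ref{thr:2} and Lemma~\ref{lem:1}.
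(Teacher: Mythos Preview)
Your proposal is correct and follows essentially the same route as the paper: invoke Theorem~\ref{thr:2} for the limit $\lim_{t\to\infty}x(t)=\tfrac{1}{n}(\mathbf{1}^T G_t x_0)G_t\mathbf{1}$, use convergence of \eqref{eq:2} to force $\lambda_j\le 0$, and match the surviving $\lambda_1=0$ term to conclude $v_1\propto G_t\mathbf{1}$. In fact you are more careful than the paper on two points it leaves implicit---the rank-one argument for simplicity of $\lambda_1=0$ and the scalar normalization ambiguity in $v_1$.
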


\begin{proof}
  By Theorem~\ref{thr:2}, if $\mathcal{G}$ is structurally balanced, then we have that 
	$\lim_{t\to\infty}x(t) =n^{-1} \left(\mathbf{1}^T G_tx_0\right) G_t \mathbf{1}$.
        Hence, $\lambda_i \leq 0$ since otherwise the RHS of Equation~\eqref{eq:2} does not converge. 

The sign structure of the vector $G_t \mathbf{1}$ corresponds to bipartition of nodes denoted in Lemma~\ref{lem:1}(5).
Since $\lambda_1 = 0$ is unique, by setting $\alpha = (1/n) \mathbf{1}^T G_t x_0$ we have that
\begin{align}
  \lim_{t\to\infty} x(t) = \lim_{t\to\infty} \sum_{j=1}^\infty e^{\lambda_j t} \varphi_j(x_0) v_j = \varphi_1(x_0) v_1 = \alpha G_t \mathbf{1}.
\end{align}
Since both $\alpha$ and $\varphi_1(x_0)$ are constants, we can see that $v_1 \propto G_t \mathbf{1}$, and the result follows.
\end{proof}


In \S\ref{sec:ident-bipart-struct}, we show an example where we use EDMD to approximate the first mode of the Koopman operator to obtain the sign structure corresponding to the bipartite consensus.


\section{EXAMPLES}
\label{sec:examples}
In this section, we show some examples that highlight the necessity of combining $\varphi$-invariance, structural balance and leader-node symmetry for uncontrollability of signed networks.
We refer the reader to~\cite{Aguilar2014} for similar examples in the unsigned case.
We then show an example of using EDMD to obtain the bipartite consensus structure of a nonlinear flow on a structurally balanced graph.

\subsection{Unsigned Symmetry is Not Sufficient for Uncontrollability}
  Here we show an example of a network flow on a signed graph which has a leader-node symmetry.
  We show that in one case, the graph is structurally balanced, and the induced flow is hence uncontrollable.
  By altering the sign on a single edge, structural balance is lost and the resulting network flow is controllable.
  \begin{figure}[H]
  	\centering
  	\begin{minipage}{0.45\linewidth}
  		\centering
  		\subfloat[]{\label{fig:SB}\includegraphics[scale = 0.5]{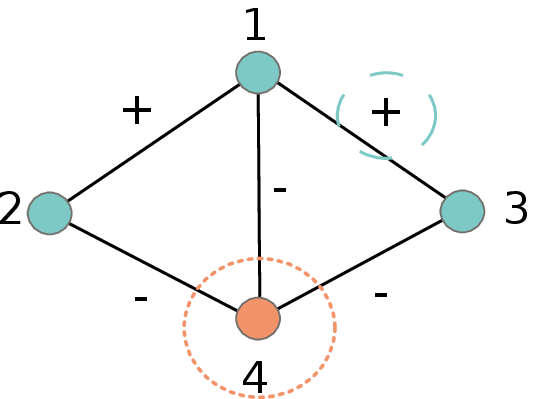}}
  	\end{minipage}
  	\hspace{3mm}
  	\begin{minipage}{0.45\linewidth}
  		\centering
  		\subfloat[]{\label{fig:SUB}\includegraphics[scale = 0.5]{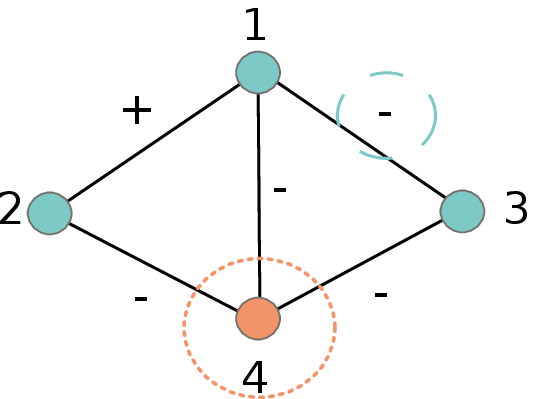}}
  	\end{minipage}
  	\caption{(a) Structurally balanced graph with a leader symmetry about node 4. (b) Structurally unbalanced graph with a leader symmetry about node 4.}
  	\label{fig:lap}
  \end{figure}
Consider the structurally balanced graph in Figure  \ref{fig:SB}, and the structurally unbalanced graph in Figure  \ref{fig:SUB}.
These have the dynamics
  $\dot x = -L_ix - \textbf{1}u$,
with Laplacians
\begin{align}
  L_{1} = \left[ 
  \begin{array}{ccc}
     2 &   -1 &    0 \\
    -1  &   3  &   1  \\
     0   &  1   &  2   
  \end{array}\right],~  L_{2} = \left[ 
  \begin{array}{ccc}
     2 &   -1 &    0 \\
    -1  &   3  &   -1  \\
     0   &  -1   &  2  
  \end{array}\right].
\end{align}
The controllability matrices of these dynamics are rank-deficient and full-rank, respectively.
 

\subsection{Identification of Bipartite Structure: Kuramoto Dynamics}\label{sec:ident-bipart-struct}
In this subsection, we consider a numerical method to identify the bipartite consensus structure of nonlinear dynamics on a structurally balanced graph, i.e. obtaining the bipartition of nodes in Lemma~\ref{lem:1}(5).
We do this by exploiting Theorem~\ref{thr:1}, and numerically approximating the Koopman mode corresponding to the zero eigenvalue.

Consider the dynamics \eqref{eq:6} with $f(\cdot) = \sin(\cdot)$, corresponding to the (signed) phase coupling of the Kuramoto dynamics\footnote{ The variable $x_i$ corresponds to the transformed phase coordinate $x_i = \phi_i - t\omega_0$, where $\omega_0$ is the oscillator's natural frequency. See Equation 2.2 in \cite{brown2003globally}}, with an underlying structurally balanced oscillator network shown in Figure~\ref{fig:edmdgraph}.
\begin{figure}[H]
  \centering
  \includegraphics[scale=0.75]{./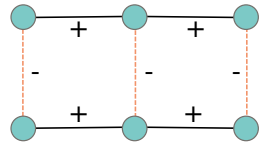}
  \caption{Underlying structurally balanced signed graph for EDMD example. Dashed edges indicate negative edges.}
\label{fig:edmdgraph}
\end{figure}
We use EDMD to approximate the first Koopman mode corresponding to the zero eigenvalue.
Due to space constraints, we refer the reader to \S2.2.3 of \cite{Williams2015} for a detailed explanation of the EDMD algorithm. 
In particular, we use a dictionary of functions of the form $\psi_k = \prod_{i=1}^6 H(x_i,j_i)$ where $H(x_i,j_i)$ is the $j_i$th order Hermite polynomial of $x_i$.
We use all such possible $\psi_k$ for Hermite polynomials up to order 2.
Simple combinatorics indicates that there are 729 such functions $\psi_k$ for a 6-node graph; hence $N_k=729$ in step (2) of \S2.2.3 in \cite{Williams2015}.
\begin{figure}[H]
	\centering
	\includegraphics[width=\columnwidth]{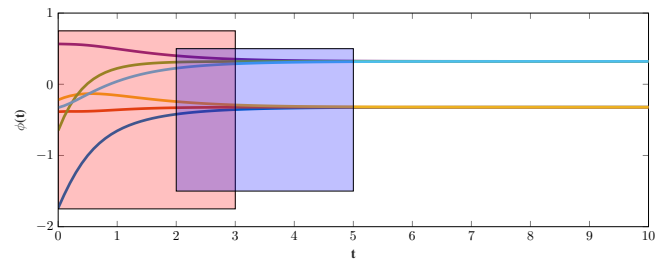}        
	\caption{Evolution of Dynamics~\ref{eq:6} with $f(\cdot)=\sin(\cdot)$ on the graph in Figure~\ref{fig:edmdgraph}. Left (red) shaded region corresponds to $\bar{v}_1^2$ and the right (blue) shaded region corresponds to $\bar{v}_1^3$.}
	\label{fig:data}
\end{figure}
The dynamics are shown in Figure~\ref{fig:data} for an initial condition of 
$x_0 = (   -1.73,
   -0.38,
   -0.21,
    0.56,
   -0.65,
   -0.32)$.
   The EDMD procedure was applied to three sets of data with a time-step of 0.1, as depicted in Figure~\ref{fig:data}: the full data (from $0\leq t \leq 10$), the data in the red shaded region ($0\leq t \leq 3)$ and the data in the blue shaded region $2\leq t \leq 5$).
   \newcommand{\numScale}{0.6}
The computed Koopman modes corresponding to $\bar{\lambda}_1\approx 1$ for these regions respectively are 
\begin{align}
\bar{v}_1^1 = {\scriptscriptstyle \left[ 
  \begin{array}{c}
\scalebox{\numScale}{    0.018}\\
\scalebox{\numScale}{    0.026}\\
\scalebox{\numScale}{    0.016}\\
\scalebox{\numScale}{   -0.020}\\
\scalebox{\numScale}{   -0.020}\\
\scalebox{\numScale}{   -0.011}    
  \end{array}\right]},~\bar{v}_1^2 = {\scriptscriptstyle \left[ 
  \begin{array}{c}
\scalebox{\numScale}{    0.019}\\
\scalebox{\numScale}{    0.021}\\
\scalebox{\numScale}{    0.015}\\
\scalebox{\numScale}{    -0.018}\\
\scalebox{\numScale}{    -0.020}\\
\scalebox{\numScale}{    -0.007}
  \end{array}\right]},~\bar{v}_1^3 = {\scriptscriptstyle \left[ 
  \begin{array}{c}
\scalebox{\numScale}{    0.016}\\
\scalebox{\numScale}{    0.023}\\
\scalebox{\numScale}{    0.015}\\
\scalebox{\numScale}{   -0.018}\\
\scalebox{\numScale}{   -0.016}\\
\scalebox{\numScale}{   -0.012}    
  \end{array}\right]},
\end{align}
which all contain sign structure corresponding to the bipartition depicted in Figure~\ref{fig:edmdgraph}.
Despite not using all available data, the EDMD procedure was able to extract the bipartition well before the dynamics converged.
This begs the question \emph{how early can we detect the bipartite structure of the underlying dynamics?}
We will address this question in future works.


\section{CONCLUSION}
\label{sec:concl-future-works}

In this paper, we examined the controllability of nonlinear flows on signed networks.
In particular, we identified that the topological property of \emph{structural balance} is the key ingredient that when combined with a leader-node symmetry and $\varphi$-invariance of the flow dynamics, results in uncontrollability.

We then looked at the task of identifying the bipartite consensus of certain classes of nonlinear network flows.
We showed that the sign structure of the first Koopman mode corresponds to this bipartite consensus, and then used EDMD to numerically approximate this sign structure.


\bibliographystyle{IEEEtran}
\bibliography{ConsensusKoopman}

\end{document}